\theoremstyle{plain} 
\newtheorem{theorem}{Theorem}
\newtheorem{lemma}[theorem]{Lemma}
\theoremstyle{definition} 
\newtheorem{definition}[theorem]{Definition}
\newtheorem{remark}[theorem]{Remark}
\newtheorem{example}[theorem]{Example}
\newcommand{\R}{\ensuremath{\mathbb{R}}}
\newcommand{\N}{\ensuremath{\mathbb{N}}}
\newcommand{\C}{\ensuremath{\mathbb{C}}}
\numberwithin{equation}{section}
\numberwithin{theorem}{section}
\begin{document}
\author[anderson]{Douglas R. Anderson}
\author[otto]{Jenna M. Otto}
\title[hyers--ulam stability of linear singular differential equations]{Hyers-Ulam stability of certain singular linear differential equations}
\address{Department of Mathematics, Concordia College, Moorhead, MN 56562 USA}
\email{andersod@cord.edu, jotto1@cord.edu}
\urladdr{http://www.cord.edu/faculty/andersod/}

\keywords{stability, variable coefficients, Cauchy-Euler equations, factored differential equations.}
\subjclass[2010]{34A30, 34A05, 34D20}

\begin{abstract}
We establish the Hyers-Ulam stability of certain linear first-order differential equations with singularities. We then extend these results to higher-order singular linear differential equations that can be written with these first-order factors. An example of our results is given for a second-order singular linear differential equation that is not be covered by the current literature in this area.
\end{abstract}

\maketitle


\thispagestyle{empty}


\section{introduction}
In 1940, Ulam \cite{ulam} posed the following problem concerning the stability of functional equations: give conditions in order for a linear mapping near an approximately linear mapping to exist. The problem for the case of approximately additive mappings was solved by Hyers \cite{hyers} who proved that the Cauchy equation is stable in Banach spaces, and the result of Hyers was generalized by Rassias \cite{rassias}. Obloza \cite{ag} appears to be the first author who investigated the Hyers-Ulam stability of a differential equation.

Since then there has been a significant amount of interest in Hyers-Ulam stability, especially in relation to ordinary differential equations, for example see \cite{gavruta,gjl,jung,jung2,jung3,jung4,lishen1,lishen2,mmt,mmt2,pr,pr2,rus,wang}. Also of interest are many of the articles in a special issue guest edited by Rassias \cite{rass2}, dealing with Ulam, Hyers-Ulam, and Hyers-Ulam-Rassias stability in various contexts. Also see Popa et al \cite{bpx, popa, pr, pr2}.  Andr\'{a}s and M\'{e}sz\'{a}ros \cite{andras} recently used an operator approach to show the stability of linear dynamic equations on time scales with constant coefficients, as well as for certain integral equations.  Anderson et al \cite{and} considered the Hyers-Ulam stability of second-order linear dynamic equations with nonconstant coefficients, while Tun\c{c} and Bi\c{c}er \cite{tunc} proved the Hyers-Ulam stability of third and fourth-order Cauchy-Euler differential equations. 


\section{Hyers-Ulam stability for first-order equations}

In \cite[Theorem 2.2]{pr2}, Popa and Ra\c{s}a prove the Hyers-Ulam stability of
\begin{equation}\label{popaeq}
 y'(t)+\lambda(t)y(t)=f(t), \quad t\in I=(a,b), 
\end{equation}
where $a,b\in\R\cup\{\pm\infty\}$, assuming that the condition
\begin{equation}\label{popacondition}
 \inf_{t\in I}|\Re\lambda(t)|:=m>0 
\end{equation}
is met, where $\Re z$ is the real part of the complex number $z$. We will consider several singular differential equations that can be written in the form \eqref{popaeq}, where we do not assume \eqref{popacondition}. Note that Hyers-Ulam stability is independent of the nonhomogeneous term $f$ in \eqref{popaeq}, so in the sequel we consider only homogeneous equations, that is $f\equiv 0$.

Before we present the main results of this section, we recall the definition of Hyers-Ulam stability.


\begin{definition}[Hyers-Ulam stability]\label{def21}
Let $\varphi:(a,\infty)\rightarrow\R$ be a continuous function, $z\in\C$ be a complex constant, and $\varepsilon>0$. If whenever a differentiable function $x:(a,\infty)\rightarrow\C$ satisfies
$$ \left| \varphi(t) x'(t)+zx(t) \right|\le \varepsilon, \quad t\in(a,\infty) $$
there exists a solution $y$ of $\varphi(t) y'(t)+zy(t)=0$ such that $|y-x|\le K\varepsilon$ on $(a,\infty)$ for some constant $K>0$, then equation $\varphi(t) y'(t)+zy(t)=0$ has Hyers-Ulam stability $(a,\infty)$. Similar definitions hold on $(0,a)$ for $a>0$.
\end{definition}


\begin{lemma}\label{lem22}
Let $z\in\C$ and $\gamma\in\R$ be given constants. If $\Re z\ne 0$, or if $\Re z=0$ and $\gamma>1$, 
then the equation
\begin{equation}\label{djeq1}
 t^\gamma y'(t)+zy(t)=0, \quad t\in (1,\infty), 
\end{equation}
is Hyers-Ulam stable on $(1,\infty)$. If $\Re z=0$ and $\gamma\le 1$, then \eqref{djeq1} is unstable on $(1,\infty)$ in the Hyers-Ulam sense. The values $K>0$ for the Hyers-Ulam stability of \eqref{djeq1} are given in Table $\ref{tab:(1,infty)K}$.
\end{lemma}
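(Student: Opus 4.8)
The plan is to solve the homogeneous equation explicitly and use the explicit solution to produce, for any approximate solution, a genuine solution close to it. Since $t^\gamma y'(t)+zy(t)=0$ separates as $y'/y=-z/t^\gamma$, the general solution is $y(t)=c\exp\left(-z\int^t s^{-\gamma}\,ds\right)$, and the behavior of the integrating factor depends sharply on $\gamma$. I would first record the antiderivative $\int_1^t s^{-\gamma}\,ds$, which equals $\frac{t^{1-\gamma}-1}{1-\gamma}$ when $\gamma\neq 1$ and $\log t$ when $\gamma=1$; this is where the three regimes ($\gamma<1$, $\gamma=1$, $\gamma>1$) naturally arise and dictate whether the exponent tends to a finite limit or diverges as $t\to\infty$.

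Next I would set up the standard variation-of-parameters machinery. Given a differentiable $x$ with $\left|t^\gamma x'(t)+zx(t)\right|\le\varepsilon$ on $(1,\infty)$, I would divide by the integrating factor and write the perturbation as the derivative of $x$ times an explicit weight, so that the inequality becomes a bound on the derivative of a modified function. Concretely, letting $u(t)=\exp\left(z\int_1^t s^{-\gamma}\,ds\right)x(t)$, one computes $u'(t)=t^{-\gamma}u(t)\big(t^\gamma x'(t)+zx(t)\big)/x(t)$—more cleanly, $u'(t)=t^{-\gamma}\exp\left(z\int_1^t s^{-\gamma}\,ds\right)\big(t^\gamma x'(t)+zx(t)\big)$, so $\left|u'(t)\right|\le\varepsilon\, t^{-\gamma}\left|\exp\left(z\int_1^t s^{-\gamma}\,ds\right)\right|$. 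The candidate solution is $y(t)=c\,\exp\left(-z\int_1^t s^{-\gamma}\,ds\right)$ with $c=\lim_{t\to\infty}u(t)$ (in the convergent cases) or $c=u(1)=x(1)$ (in the divergent cases); then $|y-x|=\left|\exp\left(-z\int_1^t s^{-\gamma}\,ds\right)\right|\,|c-u(t)|$, and I bound $|c-u(t)|$ by integrating the bound on $|u'|$.

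The main obstacle, and where the case analysis does real work, is controlling the product of the decaying/growing integrating factor $\left|\exp\left(-z\int_1^t s^{-\gamma}\,ds\right)\right|$ against the integral $\int |u'|$ so that the Hyers--Ulam constant $K$ is finite and uniform in $t$. Note $\left|\exp\left(\pm z\int_1^t s^{-\gamma}\,ds\right)\right|=\exp\left(\pm\Re z\int_1^t s^{-\gamma}\,ds\right)$, so only $\Re z$ enters the modulus. When $\Re z\neq 0$ one obtains genuine exponential-type decay in one direction and can estimate the tail integral to get a finite $K$; when $\Re z=0$ the integrating factor has modulus $1$, and stability hinges entirely on whether $\int_1^\infty s^{-\gamma}\,ds$ converges, i.e. on $\gamma>1$. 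For the instability claim when $\Re z=0$ and $\gamma\le 1$, I would exhibit an explicit approximate solution whose distance to every exact solution is unbounded—taking $x(t)=\frac{1}{i\,\mathrm{Im}\,z}\big(1-\exp(-z\int_1^t s^{-\gamma}\,ds)\big)$ or similar so that the residual is a bounded constant while $|y-x|$ grows without bound as $t\to\infty$ precisely because the phase $\int_1^t s^{-\gamma}\,ds$ diverges. Finally I would collect the explicit constants $K$ arising from each convergent tail estimate into the referenced Table $\ref{tab:(1,infty)K}$, matching each $(\Re z,\gamma)$ regime to its sharp constant.
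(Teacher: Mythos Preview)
Your approach to the stability half is essentially the same as the paper's: you use the integrating factor to write $u(t)=\exp\big(z\int_1^t s^{-\gamma}\,ds\big)x(t)$, bound $|u'|$, and choose $c$ either as $u(1)$ or as $\lim_{t\to\infty}u(t)$ depending on which tail integral converges. The paper packages this as choosing the lower limit of integration $A\in\{1,\infty\}$ in the variation-of-parameters formula, but it is the same computation and yields the same constants.

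There is, however, a genuine gap in your instability argument. Your proposed approximate solution
\[
x(t)=\frac{1}{i\,\mathrm{Im}\,z}\Big(1-\exp\big(-z\textstyle\int_1^t s^{-\gamma}\,ds\big)\Big)
\]
does \emph{not} have unbounded distance from every exact solution. With $z=i\beta$ and $W(t)=\int_1^t s^{-\gamma}\,ds$, one checks that for any solution $y(t)=c\,e^{-i\beta W(t)}$,
\[
y(t)-x(t)=\Big(c+\tfrac{1}{i\beta}\Big)e^{-i\beta W(t)}-\tfrac{1}{i\beta},
\]
which is bounded in modulus since $|e^{-i\beta W(t)}|=1$. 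The divergence of the \emph{phase} $W(t)$ does not by itself force $|y-x|\to\infty$; you need the \emph{amplitude} to diverge. The correct construction (this is what the paper does, in the proof of Theorem~\ref{thm25}) is to multiply the homogeneous solution by the unbounded factor $W(t)$ itself: take
\[
x(t)=\varepsilon\,W(t)\,e^{-i\beta W(t)}.
\]
Then $t^\gamma x'(t)+i\beta x(t)=\varepsilon\,e^{-i\beta W(t)}$ has modulus exactly $\varepsilon$, while $|y(t)-x(t)|=|c-\varepsilon W(t)|\to\infty$ for every constant $c$, precisely because $W(t)\to\infty$ when $\gamma\le 1$. Note too that this example covers the case $z=0$ (take $\beta=0$), whereas your formula divides by $\mathrm{Im}\,z$ and breaks down there.
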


\begin{table}[t]
\centering
\begin{tabular}{|c|ccc|} \hline
$(1,\infty)$ & $\Re z<0$ & $\Re z=0$ & $\Re z>0$ \\ \hline
$\gamma<1$ &  $\displaystyle\frac{1}{|\Re z|}$ & None & $\displaystyle\frac{1}{\Re z}$ \\
$\gamma=1$ & $\displaystyle\frac{1}{|\Re z|}$ & None & $\displaystyle\frac{1}{\Re z}$ \\
$\gamma>1$ & $\displaystyle\frac{1}{\Re z}\left(1-e^{\frac{\Re z}{1-\gamma}}\right)$ & $\displaystyle\frac{1}{\gamma-1}$ & $\displaystyle\frac{1}{\Re z}\left(1-e^{\frac{\Re z}{1-\gamma}}\right)$ \\ \hline
\end{tabular}
\caption{$K$ values for Hyers-Ulam stability of \eqref{djeq1} on $(1,\infty)$}
\label{tab:(1,infty)K}
\end{table}

\begin{proof}
Let $t_0\in(1,\infty)$. If $x:(1,\infty)\rightarrow\C$ is an approximate solution of \eqref{djeq1} such that
$$ t^\gamma x'(t)+zx(t)=q(t), \quad |q(t)| \le \varepsilon, \quad t\in(1,\infty), $$
for some perturbation $q:(1,\infty)\rightarrow\C$ and some $\varepsilon>0$, pick $y:(1,\infty)\rightarrow\C$ to be a solution of \eqref{djeq1} in the following way:
Let $A=\infty$ if $\Re z<0$ and $\gamma\le 1$, but let $A=1$ otherwise. If $\gamma=1$, then
\begin{equation}\label{yxc1}
 y(t)=ct^{-z}, \quad x(t)=y(t)+t^{-z}\int_{A}^{t}s^{z-1}q(s)ds, \quad c=t_0^zx(t_0)+\int_{t_0}^{A}s^{z-1}q(s)ds;
\end{equation}
if $\gamma\ne 1$, then
\begin{equation}\label{yxcnot1}
 y(t)=ce^{\frac{-zt^{1-\gamma}}{1-\gamma}}, \quad x(t)=y(t)+e^{\frac{-zt^{1-\gamma}}{1-\gamma}}\int_{A}^{t}s^{-\gamma}q(s)e^{\frac{zs^{1-\gamma}}{1-\gamma}}ds, \quad c=e^{\frac{zt_0^{1-\gamma}}{1-\gamma}}x(t_0)+\int_{t_0}^{A}s^{-\gamma}q(s)e^{\frac{zs^{1-\gamma}}{1-\gamma}}ds.
\end{equation}
We now proceed by cases to verify the $K$ values in Table \ref{tab:(1,infty)K}, where $K>0$ is the Hyers-Ulam constant in Definition \ref{def21}.

If $\Re z<0$ and $\gamma\le 1$, then $A=\infty$ and by \eqref{yxc1} and \eqref{yxcnot1} we have
\[ |y(t)-x(t)| = \begin{cases} 
\displaystyle\left|t^{-z}\int^{\infty}_{t}s^{z-1}q(s)ds\right| \le \varepsilon t^{-\Re z}\int^{\infty}_{t}s^{\Re z-1}ds = \varepsilon\frac{1}{|\Re z|} &: \gamma=1, \\ & \\
\displaystyle\left|e^{\frac{-zt^{1-\gamma}}{1-\gamma}}\int^{\infty}_{t}s^{-\gamma}q(s)e^{\frac{zs^{1-\gamma}}{1-\gamma}}ds\right| \le \varepsilon e^{\frac{-t^{1-\gamma}\Re z}{1-\gamma}}\int^{\infty}_{t}s^{-\gamma}e^{\frac{s^{1-\gamma}\Re z}{1-\gamma}}ds = \varepsilon\frac{1}{|\Re z|} &: \gamma<1.
\end{cases} \]

If $\Re z>0$ and $\gamma\le 1$, then $A=1$ and by \eqref{yxc1} and \eqref{yxcnot1} we have
\[ |y(t)-x(t)| = \begin{cases} 
\displaystyle\left|t^{-z}\int_{1}^{t}s^{z-1}q(s)ds\right| \le \varepsilon \frac{t^{-\Re z}}{\Re z}\left(t^{\Re z}-1\right) \le \varepsilon\frac{1}{\Re z} &: \gamma=1, \\ & \\
\displaystyle\left|e^{\frac{-zt^{1-\gamma}}{1-\gamma}}\int_{1}^{t}s^{-\gamma}q(s)e^{\frac{zs^{1-\gamma}}{1-\gamma}}ds\right| \le \frac{\varepsilon}{\Re z} \left(1-e^{\frac{\Re z}{1-\gamma}\left(1-t^{1-\gamma}\right)}\right) \le \varepsilon\frac{1}{\Re z} &: \gamma<1.
\end{cases} \]

If $\gamma>1$, then $A=1$ and by \eqref{yxc1} and \eqref{yxcnot1} we have
\[ |y(t)-x(t)| = \begin{cases}
\displaystyle\left|e^{\frac{-i\beta t^{1-\gamma}}{1-\gamma}}\int_{1}^{t}s^{-\gamma}q(s)e^{\frac{i\beta s^{1-\gamma}}{1-\gamma}}ds\right| \le \varepsilon \int_{1}^{t}s^{-\gamma}ds \le \varepsilon\frac{1}{\gamma-1} &: z=i\beta, \\ & \\
\displaystyle\left|e^{\frac{-zt^{1-\gamma}}{1-\gamma}}\int_{1}^{t}s^{-\gamma}q(s)e^{\frac{zs^{1-\gamma}}{1-\gamma}}ds\right| \le \frac{\varepsilon}{\Re z} \left(1-e^{\frac{\Re z}{1-\gamma}\left(1-t^{1-\gamma}\right)}\right) \le \frac{\varepsilon}{\Re z}\left(1-e^{\frac{\Re z}{1-\gamma}}\right) &: \Re z\ne 0.
\end{cases} \]

If $\Re z=0$ and $\gamma\le 1$, please see the proof of Theorem \ref{thm25} below for specific examples illustrating that \eqref{djeq1} is not Hyers-Ulam stable on $(1,\infty)$ for these values. All together, the result is proven and Table $\ref{tab:(1,infty)K}$ is verified.
\end{proof}


\begin{lemma}\label{lem23}
Let $z\in\C$ and $\gamma\in\R$ be given constants. If $\Re z\ne 0$, or if $\Re z=0$ and $\gamma<1$, 
then the singular equation
\begin{equation}\label{djeq2}
 t^\gamma y'(t)+zy(t)=0, \quad t\in (0,1), 
\end{equation}
is Hyers-Ulam stable on $(0,1)$. If $\Re z=0$ and $\gamma\ge 1$, then \eqref{djeq2} is unstable on $(0,1)$ in the Hyers-Ulam sense. The values $K>0$ for the Hyers-Ulam stability of \eqref{djeq2} are given in Table $\ref{tab:(0,1)K}$.
\end{lemma}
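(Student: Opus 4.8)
The plan is to follow the proof of Lemma~\ref{lem22} almost verbatim, with the interval $(0,1)$ replacing $(1,\infty)$; the only genuine change is in how the free lower limit of integration $A$ is selected. Given an approximate solution $x:(0,1)\to\C$ satisfying $t^\gamma x'(t)+zx(t)=q(t)$ with $|q(t)|\le\varepsilon$, I would again build a true solution $y$ of \eqref{djeq2} from the variation-of-parameters formulas \eqref{yxc1} (when $\gamma=1$) and \eqref{yxcnot1} (when $\gamma\ne 1$), fixing the constant $c$ by an initial condition at some $t_0\in(0,1)$. Then $y-x$ is exactly the integral term, so everything reduces to bounding $\bigl|e^{-z\phi(t)}\int_A^t s^{-\gamma}q(s)e^{z\phi(s)}\,ds\bigr|$ by $K\varepsilon$, where $\phi(s)=s^{1-\gamma}/(1-\gamma)$ for $\gamma\ne 1$ and $\phi(s)=\Log s$ for $\gamma=1$.

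The heart of the matter is the shape of $\phi$ on $(0,1)$. Since $\phi'(s)=s^{-\gamma}>0$, $\phi$ is increasing; for $\gamma<1$ it rises from $0$ to $1/(1-\gamma)$, while for $\gamma>1$ it rises from $-\infty$ to $1/(1-\gamma)<0$. Using $\int_A^t\phi'(s)e^{\Re z\,\phi(s)}\,ds=\tfrac{1}{\Re z}\bigl(e^{\Re z\,\phi(t)}-e^{\Re z\,\phi(A)}\bigr)$ when $\Re z\ne 0$, each case collapses to the single estimate $|y-x|\le\frac{\varepsilon}{|\Re z|}\bigl|1-e^{\Re z(\phi(A)-\phi(t))}\bigr|$. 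I would then take $A=0$ when $\Re z>0$ and $A=1$ when $\Re z<0$, which keeps $e^{\Re z\,\phi(A)}$ finite and the estimate uniformly bounded in $t$; reading off the supremum over $t\in(0,1)$ produces the entries of Table~\ref{tab:(0,1)K} (for instance $K=1/\Re z$ for $\gamma>1,\ \Re z>0$ via $A=0$, and $K=1/|\Re z|$ for $\gamma>1,\ \Re z<0$ via $A=1$). The borderline stable cell $\Re z=0,\ \gamma<1$ is handled separately: there the weight has modulus one and the bound becomes $\varepsilon\int_0^t s^{-\gamma}\,ds=\varepsilon\,t^{1-\gamma}/(1-\gamma)\le\varepsilon/(1-\gamma)$, giving $K=1/(1-\gamma)$.

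The main obstacle, and the one place where $(0,1)$ differs substantively from Lemma~\ref{lem22}, is the behaviour as $s\to 0^+$ when $\gamma>1$: there $s^{-\gamma}$ is not integrable on its own, so convergence and boundedness of the representation are decided entirely by the factor $e^{\Re z\,\phi(s)}$, which decays to $0$ if $\Re z>0$ but blows up if $\Re z<0$. This is precisely what makes $A=0$ admissible and optimal when $\Re z>0$, forces $A=1$ when $\Re z<0$, and collapses when $\Re z=0,\ \gamma\ge 1$, since then $\int s^{-\gamma}\,ds$ diverges at $0$ and no bounded solution can shadow $x$. For those unstable cells I would, exactly as in Lemma~\ref{lem22}, defer to explicit counterexamples and point to the proof of Theorem~\ref{thm25}. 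As an independent check, the substitution $\tau=1/t$ carries \eqref{djeq2} with parameters $(\gamma,z)$ on $(0,1)$ into \eqref{djeq1} with parameters $(2-\gamma,-z)$ on $(1,\infty)$ while preserving both the perturbation bound and the supremum norm; invoking Lemma~\ref{lem22} then reproduces the stability criterion, since $\gamma<1\Leftrightarrow 2-\gamma>1$.
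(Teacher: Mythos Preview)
Your proposal follows the paper's proof almost exactly: both set up the same variation-of-parameters representation, choose the endpoint $A$ according to the sign of $\Re z$ and the position of $\gamma$, and bound $|y-x|$ case by case, deferring the unstable cells $\Re z=0,\ \gamma\ge 1$ to Theorem~\ref{thm25}. The one substantive difference is your universal rule ``$A=0$ when $\Re z>0$, $A=1$ when $\Re z<0$'': the paper instead takes $A=0$ only when $\Re z>0$ \emph{and} $\gamma\ge 1$, and $A=1$ otherwise. Both choices are admissible, but in the cell $\gamma<1,\ \Re z>0$ your rule yields $K=\tfrac{1}{\Re z}\bigl(1-e^{-\Re z/(1-\gamma)}\bigr)$, which is strictly smaller than (hence implies, but does not literally reproduce) the Table~\ref{tab:(0,1)K} entry $\tfrac{1}{\Re z}\bigl(e^{\Re z/(1-\gamma)}-1\bigr)$; so your claim to read off Table~\ref{tab:(0,1)K} directly is slightly off there. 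Your closing substitution $\tau=1/t$, transporting \eqref{djeq2} with parameters $(\gamma,z)$ to \eqref{djeq1} with parameters $(2-\gamma,-z)$, is a clean independent check of the stability dichotomy that does not appear in the paper.
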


\begin{proof}
The proof proceeds in a way similar to that of the proof of Lemma \ref{lem22}.
Let $t_0\in(0,1)$. If $x:(0,1)\rightarrow\C$ is an approximate solution of \eqref{djeq2} such that
$$ t^\gamma x'(t)+zx(t)=q(t), \quad |q(t)| \le \varepsilon, \quad t\in(0,1), $$
for some $q:(0,1)\rightarrow\C$ and some $\varepsilon>0$, pick $y:(0,1)\rightarrow\C$ to be a solution of \eqref{djeq2} in the following way:
Let $A=0$ if $\Re z>0$ and $\gamma\ge 1$, but let $A=1$ otherwise. If $\gamma=1$, then $y$, $x$, and $c$ are given in \eqref{yxc1}; if $\gamma\ne 1$, then
$y$, $x$, and $c$ are given in \eqref{yxcnot1}. We again proceed by cases to verify the $K$ values in Table \ref{tab:(0,1)K}.

If $\Re z>0$ and $\gamma\ge 1$, then $A=0$ and by \eqref{yxc1} and \eqref{yxcnot1} we have
\[ |y(t)-x(t)| = \begin{cases} 
\displaystyle\left|t^{-z}\int_{0}^{t}s^{z-1}q(s)ds\right| \le \varepsilon t^{-\Re z}\int_{0}^{t}s^{\Re z-1}ds = \varepsilon\frac{1}{\Re z} &: \gamma=1, \\ & \\
\displaystyle\left|e^{\frac{-zt^{1-\gamma}}{1-\gamma}}\int_{0}^{t}s^{-\gamma}q(s)e^{\frac{zs^{1-\gamma}}{1-\gamma}}ds\right| \le \varepsilon e^{\frac{-t^{1-\gamma}\Re z}{1-\gamma}}\int_{0}^{t}s^{-\gamma}e^{\frac{s^{1-\gamma}\Re z}{1-\gamma}}ds = \varepsilon\frac{1}{\Re z} &: \gamma>1.
\end{cases} \]

If $\Re z<0$ and $\gamma\ge 1$, then $A=1$ and by \eqref{yxc1} and \eqref{yxcnot1} we have
\[ |y(t)-x(t)| = \begin{cases} 
\displaystyle\left|t^{-z}\int_{1}^{t}s^{z-1}q(s)ds\right| \le \varepsilon \frac{t^{-\Re z}}{|\Re z|}\left|t^{\Re z}-1\right| \le \varepsilon\frac{1}{|\Re z|} &: \gamma=1, \\ & \\
\displaystyle\left|e^{\frac{-zt^{1-\gamma}}{1-\gamma}}\int_{t}^{1}s^{-\gamma}q(s)e^{\frac{zs^{1-\gamma}}{1-\gamma}}ds\right| \le \frac{\varepsilon}{|\Re z|} \left|e^{\frac{-\Re z}{\gamma-1}\left(1-\frac{1}{t^{\gamma-1}}\right)}-1\right| \le \varepsilon\frac{1}{|\Re z|} &: \gamma>1.
\end{cases} \]

If $\gamma<1$, then $A=1$ and by \eqref{yxc1} and \eqref{yxcnot1} we have
\[ |y(t)-x(t)| = \begin{cases}
\displaystyle\left|e^{\frac{-i\beta t^{1-\gamma}}{1-\gamma}}\int_{t}^{1}s^{-\gamma}q(s)e^{\frac{i\beta s^{1-\gamma}}{1-\gamma}}ds\right| \le \varepsilon \int_{t}^{1}s^{-\gamma}ds \le \varepsilon\frac{1}{1-\gamma} &: z=i\beta, \\ & \\
\displaystyle\left|e^{\frac{-zt^{1-\gamma}}{1-\gamma}}\int_{t}^{1}s^{-\gamma}q(s)e^{\frac{zs^{1-\gamma}}{1-\gamma}}ds\right| \le \frac{\varepsilon}{|\Re z|} \left|e^{\frac{\Re z}{1-\gamma}\left(1-t^{1-\gamma}\right)}-1\right| \le \frac{\varepsilon}{\Re z}\left(e^{\frac{\Re z}{1-\gamma}}-1\right) &: \Re z\ne 0.
\end{cases} \]

If $\Re z=0$ and $\gamma\ge 1$, please see the proof of Theorem \ref{thm25} below for specific examples illustrating that \eqref{djeq2} is not Hyers-Ulam stable on $(0,1)$ for these values. All together, the result is proven and Table $\ref{tab:(0,1)K}$ is verified.
\end{proof}

\begin{table}[t]
\centering
\begin{tabular}{|c|ccc|} \hline
$(0,1)$    & $\Re z<0$ & $\Re z=0$ & $\Re z>0$ \\ \hline
$\gamma<1$ & $\displaystyle\frac{1}{\Re z}\left(e^{\frac{\Re z}{1-\gamma}}-1\right)$ & $\displaystyle\frac{1}{1-\gamma}$ & $\displaystyle\frac{1}{\Re z}\left(e^{\frac{\Re z}{1-\gamma}}-1\right)$ \\
$\gamma=1$ & $\displaystyle\frac{1}{|\Re z|}$ & None & $\displaystyle\frac{1}{\Re z}$ \\
$\gamma>1$ & $\displaystyle\frac{1}{|\Re z|}$ & None & $\displaystyle\frac{1}{\Re z}$ \\ \hline
\end{tabular}
\caption{$K$ values for Hyers-Ulam stability of \eqref{djeq2} on $(0,1)$}
\label{tab:(0,1)K}
\end{table}


\begin{remark}\label{renonhomo}
Consider the nonhomogeneous version of \eqref{djeq1}, namely
\begin{equation}\label{nonhomof}
  t^\gamma y'(t)+zy(t)=f(t), \quad t\in (1,\infty),
\end{equation}
for some continuous function $f:(1,\infty)\rightarrow\R$. We can easily modify the proof of Lemma \ref{lem22} as follows. If $x:(1,\infty)\rightarrow\C$ is an approximate solution of \eqref{nonhomof} such that
$$ t^\gamma x'(t)+zx(t)-f(t)=q(t), \quad |q(t)| \le \varepsilon, \quad t\in(1,\infty), $$
for some $q:(1,\infty)\rightarrow\C$ and some $\varepsilon>0$, pick $y:(1,\infty)\rightarrow\C$ to be a solution of \eqref{nonhomof} in the following way:
Let $A=\infty$ if $\Re z<0$ and $\gamma\le 1$, but let $A=1$ otherwise. If $\gamma=1$, then
\begin{eqnarray}\label{yxc1ff}
\nonumber y(t) &=& ct^{-z}+t^{-z}\int_{A}^{t}s^{z-1}f(s)ds, \quad x(t)=y(t)+t^{-z}\int_{A}^{t}s^{z-1}q(s)ds, \\
            c  &=& t_0^zx(t_0)+\int_{t_0}^{A}s^{z-1}(f(s)+q(s))ds;
\end{eqnarray}
if $\gamma\ne 1$, then
\begin{eqnarray}\label{yxcnot1ff}
\nonumber y(t) &=& ce^{\frac{-zt^{1-\gamma}}{1-\gamma}}+e^{\frac{-zt^{1-\gamma}}{1-\gamma}}\int_{A}^{t}s^{-\gamma}f(s)e^{\frac{zs^{1-\gamma}}{1-\gamma}}ds, \quad x(t)=y(t)+e^{\frac{-zt^{1-\gamma}}{1-\gamma}}\int_{A}^{t}s^{-\gamma}q(s)e^{\frac{zs^{1-\gamma}}{1-\gamma}}ds, \\    
   c &=& e^{\frac{zt_0^{1-\gamma}}{1-\gamma}}x(t_0)+\int_{t_0}^{A}s^{-\gamma}(f(s)+q(s))e^{\frac{zs^{1-\gamma}}{1-\gamma}}ds.
\end{eqnarray}
As the key calculations are based on $|y(t)-x(t)|$, we see from \eqref{yxc1ff} and \eqref{yxcnot1ff} that nothing is changed due to $f$, as its terms subtract off. Lemma \ref{lem23} can likewise accommodate a nonhomogeneous term without effect.
\end{remark}


\begin{remark}
Combining the results on $(1,\infty)$ from Lemma \ref{lem22} with those on $(0,1)$ from Lemma \ref{lem23}, we have the following theorem on the half line $(0,\infty)$. Of course, the results above could just as easily be on $(a,\infty)$ and $(0,a)$ for any $a>0$.
\end{remark}


\begin{theorem}\label{thm25}
Let $z\in\C$ and $\gamma\in\R$ be given constants. The singular differential equation
\begin{equation}\label{djeq4}
 t^\gamma y'(t)+zy(t)=0, \quad t\in (0,\infty), 
\end{equation}
is Hyers-Ulam stable on $(0,\infty)$ if and only if $\Re z\ne 0$.
\end{theorem}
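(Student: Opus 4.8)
The plan is to establish the two implications of the biconditional separately, reusing the machinery already built on the two subintervals. For the direction $\Re z\ne0\Rightarrow$ stability, Lemmas \ref{lem22} and \ref{lem23} already give Hyers--Ulam stability on $(1,\infty)$ and on $(0,1)$, respectively, for every $\gamma\in\R$. The one point that is genuinely new on the full half-line is that a \emph{single} solution $y$ of \eqref{djeq4} must stay within $K\varepsilon$ of a given approximate solution $x$ simultaneously on $(0,1)$ and on $(1,\infty)$, whereas the two lemmas in several cases anchor their integrals at different base points $A$ and therefore a priori produce two different solutions.

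To resolve this I would make one uniform choice of base point. Given an approximate solution $x$ on $(0,\infty)$ with $t^\gamma x'(t)+zx(t)=q(t)$ and $|q|\le\varepsilon$, I would set $A=0$ when $\Re z>0$ and $A=\infty$ when $\Re z<0$, and define $y$ by the integrating-factor formulas \eqref{yxc1}--\eqref{yxcnot1} with this single $A$; then $y$ is one honest solution of \eqref{djeq4} on all of $(0,\infty)$, and $|y-x|$ equals exactly the particular integral appearing there. Two routine checks finish this direction: first, the improper integral converges for every $t\in(0,\infty)$ (for $\gamma>1$ the exponential factor $e^{\Re z\,s^{1-\gamma}/(1-\gamma)}$ forces integrability at the base point, while for $\gamma\le1$ the power $s^{-\gamma}$ does); second, the same substitution used in the lemmas collapses the estimate to the clean uniform bound $|y(t)-x(t)|\le\varepsilon/|\Re z|$ on the whole half-line, so $K=1/|\Re z|$ works regardless of $\gamma$.

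For the direction $\Re z=0\Rightarrow$ instability, write $z=i\beta$ and note that every solution of \eqref{djeq4} has constant modulus, since the homogeneous solutions are $c\,t^{-i\beta}$ when $\gamma=1$ and $c\,e^{-i\beta t^{1-\gamma}/(1-\gamma)}$ when $\gamma\ne1$. I would then exhibit one explicit unbounded approximate solution. Fixing a solution $y_0$ with $|y_0|\equiv1$ and a base point $t_0\in(0,\infty)$, set
\[ x(t)=p(t)\,y_0(t),\qquad p(t)=\varepsilon\int_{t_0}^{t}s^{-\gamma}\,ds. \]
A direct computation gives $t^\gamma x'(t)+i\beta x(t)=\varepsilon\,y_0(t)$, so $x$ is an $\varepsilon$-approximate solution, yet $|x(t)|=\varepsilon\bigl|\int_{t_0}^{t}s^{-\gamma}\,ds\bigr|$. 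Since $\int s^{-\gamma}$ diverges at $\infty$ when $\gamma\le1$ and at $0^{+}$ when $\gamma\ge1$, for every $\gamma\in\R$ at least one endpoint forces $|x(t)|\to\infty$; this simultaneously supplies the concrete examples promised in Lemmas \ref{lem22} and \ref{lem23}. Because every solution $y$ has $|y|$ constant, $\sup_t|y(t)-x(t)|=\infty$ for every $y$, so no constant $K$ can work and \eqref{djeq4} is unstable.

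I expect the main obstacle to be the patching step in the stability direction: verifying that the single choice $A\in\{0,\infty\}$ really does yield a convergent integral and a finite bound for \emph{all} $\gamma$ at once, rather than only in the case-by-case ranges handled by the two lemmas separately. The instability direction is comparatively routine once the constant-modulus observation is in hand and one notes that $\int s^{-\gamma}$ blows up at one endpoint for each $\gamma$.
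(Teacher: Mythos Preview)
Your instability argument is essentially identical to the paper's: the explicit approximate solution $x(t)=p(t)y_0(t)$ with $p(t)=\varepsilon\int_{t_0}^t s^{-\gamma}\,ds$ is exactly what the paper writes down (they just display the antiderivative, $\varepsilon\ln t$ or $\varepsilon\,t^{1-\gamma}/(1-\gamma)$, case by case). So that direction matches.

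For the stability direction your approach genuinely differs from, and is more careful than, the paper's. The paper simply writes that stability on $(0,\infty)$ ``follows from Lemmas~\ref{lem22} and~\ref{lem23}'' and moves on; it never addresses the patching issue you raise, namely that the two lemmas may anchor at different base points $A$ and hence hand back two different solutions $y$. Your fix---choosing the single base point $A=0$ when $\Re z>0$ and $A=\infty$ when $\Re z<0$, and checking that the improper integral then converges for every $\gamma$---produces one global solution and the clean uniform constant $K=1/|\Re z|$, which is sharper than the case-by-case constants in Tables~\ref{tab:(1,infty)K} and~\ref{tab:(0,1)K}. (Indeed, for e.g.\ $\Re z>0$, $\gamma>1$, the $y$ built in Lemma~\ref{lem22} with $A=1$ does \emph{not} stay close to $x$ as $t\to 0^+$, so the patching concern is real.) One small correction to your parenthetical convergence sketch: when $A=\infty$ the roles of ``power'' and ``exponential'' are reversed from what you wrote---for $\gamma>1$ it is the factor $s^{-\gamma}$ that gives integrability at infinity, while for $\gamma<1$ it is the exponential decay of $e^{\Re z\,s^{1-\gamma}/(1-\gamma)}$; the boundary case $\gamma=1$ uses $s^{\Re z-1}$ directly. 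The conclusion is unaffected.
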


\begin{proof}
If $\Re z\ne 0$, the Hyers-Ulam stability of \eqref{djeq4} follows from Lemmas \ref{lem22} and \ref{lem23}, respectively. If $\Re z=0$, let $\gamma,\beta\in\R$ and $i=\sqrt{-1}$, and consider the singular differential equation
\begin{equation}\label{sharp1}
  t^\gamma y'(t)+i\beta y(t)=0, \quad t\in(0,\infty).
\end{equation}
We will show \eqref{sharp1} is unstable in the sense of Hyers and Ulam.

Given any $\varepsilon>0$, for $t\in(0,\infty)$ let 
\[ x(t) = \begin{cases} \varepsilon t^{-i\beta}\ln t &: \gamma=1, \\ \varepsilon \left(\frac{t^{1-\gamma}}{1-\gamma}\right)e^\frac{i\beta t^{1-\gamma}}{-1+\gamma} &: \gamma\ne 1. \end{cases} \] 
Then
\[ \left|t^\gamma  x'(t)+i\beta x(t)\right| = \begin{cases} \left|\varepsilon t^{-i\beta}\right|= \varepsilon &: \gamma=1, \\ \left|\varepsilon  e^\frac{i\beta t^{1-\gamma}}{-1+\gamma}\right|= \varepsilon &: \gamma\ne 1,  \end{cases} \] 
for all $t\in(0,\infty)$. Clearly 
\[ y(t) = \begin{cases} ct^{-i\beta} &: \gamma=1, \\ c e^\frac{i\beta t^{1-\gamma}}{-1+\gamma} &: \gamma\ne 1, \end{cases} \] 
where $c$ is a constant, is the only type of solution of \eqref{sharp1}, but 
\[ |y(t)-x(t)| = \begin{cases} |ct^{-i\beta}-\varepsilon t^{-i\beta}\ln t| = |c-\varepsilon\ln t| &: \gamma=1, \\ \left|c e^\frac{i\beta t^{1-\gamma}}{-1+\gamma}-\varepsilon\left(\frac{t^{1-\gamma}}{1-\gamma}\right)e^\frac{i\beta t^{1-\gamma}}{-1+\gamma}\right| = \left|c-\varepsilon\left(\frac{t^{1-\gamma}}{1-\gamma}\right)\right| &: \gamma\ne 1, \end{cases} \] 
is unbounded on $(0,\infty)$, for any choice of $c$. Consequently, \eqref{sharp1} is unstable in the sense of Hyers and Ulam on $(0,\infty)$ for any $\gamma\in\R$.
\end{proof}


\begin{remark}
Theorem \ref{thm25} highlights a difference between stability of equilibria and Hyers-Ulam stability for equation \eqref{djeq4} on $(0,\infty)$. For example, when $\gamma=z=1$, equation \eqref{djeq4} is Hyers-Ulam stable on $(0,\infty)$, but the trivial solution $y\equiv 0$ is unstable, since given any $\varepsilon>0$ and any initial point $t_0\in(0,\infty)$, $y_0(t)=\varepsilon  t_0/t$ is also a solution and 
\[ \lim_{t\rightarrow 0^+}|y_0(t)-y(t)|=\infty. \]
\end{remark}


\begin{theorem}\label{thm2.6}
Let $z\in\C$ and $\gamma\in\R$ be given constants. The singular differential equation
\begin{equation}\label{djeq3}
 t(\ln t)^\gamma y'(t)+zy(t)=0, \quad t\in (1,\infty), 
\end{equation}
is Hyers-Ulam stable on $(1,\infty)$ if and only if $\Re z\ne 0$.
\end{theorem}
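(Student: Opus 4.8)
The plan is to reduce \eqref{djeq3} on $(1,\infty)$ to the already-settled equation \eqref{djeq4} on $(0,\infty)$ by the change of variables $u=\ln t$. Since $t\mapsto \ln t$ is a smooth diffeomorphism of $(1,\infty)$ onto $(0,\infty)$, this substitution sets up a bijection between differentiable functions on the two intervals, and I expect it to carry Hyers-Ulam stability across with the constant $K$ unchanged, so that the desired equivalence follows immediately from Theorem \ref{thm25}.

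Concretely, first I would set $Y(u)=y(e^u)$ for a function $y$ on $(1,\infty)$, so that $Y'(u)=e^u y'(e^u)=t\,y'(t)$ with $t=e^u$. Then
\[ t(\ln t)^\gamma y'(t)+z y(t) = (\ln t)^\gamma\bigl(t\,y'(t)\bigr)+z y(t) = u^\gamma Y'(u)+z Y(u), \]
so $y$ solves \eqref{djeq3} on $(1,\infty)$ if and only if $Y$ solves \eqref{djeq4} on $(0,\infty)$. The essential point is that the factor $t$ absorbs the Jacobian of the substitution while the factor $(\ln t)^\gamma$ becomes exactly $u^\gamma$.

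Next I would verify that the defining inequalities of Definition \ref{def21} are preserved verbatim. If $x$ is an $\varepsilon$-approximate solution of \eqref{djeq3}, that is $|t(\ln t)^\gamma x'(t)+z x(t)|\le\varepsilon$, then $X(u)=x(e^u)$ satisfies $|u^\gamma X'(u)+z X(u)|\le\varepsilon$ by the identity above, with the same perturbation; the converse holds by the same computation run backwards. Moreover, because $t=e^u$ is a bijection of the intervals, $\sup_{t\in(1,\infty)}|y(t)-x(t)|=\sup_{u\in(0,\infty)}|Y(u)-X(u)|$. Hence \eqref{djeq3} is Hyers-Ulam stable on $(1,\infty)$ with constant $K$ exactly when \eqref{djeq4} is Hyers-Ulam stable on $(0,\infty)$ with the same $K$, and Theorem \ref{thm25} gives stability precisely when $\Re z\ne 0$. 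For $\Re z=0$ I would note that the explicit counterexamples of Theorem \ref{thm25} pull back under $u=\ln t$ to approximate solutions of \eqref{djeq3} whose distance to every genuine solution (for instance $|c-\varepsilon\ln(\ln t)|$ in the case $\gamma=1$) is unbounded as $t\to\infty$.

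The argument is mostly bookkeeping; the one point demanding care is the claim that the correspondence loses nothing in either direction of the equivalence, so that the same $\varepsilon$ and the same $K$ serve on both intervals. I expect the only mild subtlety to be confirming that the pulled-back counterexamples remain genuinely unbounded (rather than merely large), which follows since $\ln t\to\infty$ as $t\to\infty$ mirrors the limit $u\to\infty$ used in the proof of Theorem \ref{thm25}.
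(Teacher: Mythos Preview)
Your proposal is correct and follows essentially the same approach as the paper: both reduce \eqref{djeq3} to \eqref{djeq4} via the diffeomorphism $u=\ln t$ (equivalently $t=e^u$), observe that solutions and $\varepsilon$-approximate solutions correspond bijectively under this substitution, and then invoke Theorem~\ref{thm25}. Your write-up is in fact a bit more explicit than the paper's in verifying that the $\varepsilon$-inequality and the sup-norm distance transfer unchanged, which is all to the good.
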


\begin{proof}
Let $y:(0,\infty)\rightarrow\C$ be a solution of \eqref{djeq4}, and let $Y:(1,\infty)\rightarrow\C$ be given by $Y(t):=y(\ln t)$. Then
\[ t(\ln t)^\gamma Y'(t)+zY(t)=(\ln t)^\gamma y'(\ln t)+zy(\ln t)=0 \]
for $t\in(1,\infty)$, so that $Y$ is solution of \eqref{djeq3}. Similarly, if $Y:(1,\infty)\rightarrow\C$ is a solution of \eqref{djeq3}, then $y:(0,\infty)\rightarrow\C$ is a solution of \eqref{djeq4} via $y(t):=Y(e^t)$ with $t=\ln u$ for $u\in(1,\infty)$. Thus there is a one-to-one correspondence between the solutions of \eqref{djeq4} and the solutions of \eqref{djeq3}, and likewise between approximate solutions of \eqref{djeq4} and  \eqref{djeq3}, respectively. By Theorem \ref{thm25}, the singular equation \eqref{djeq4} is Hyers-Ulam stable on $(0,\infty)$ if and only if $\Re z\ne 0$. The result for \eqref{djeq3} follows.
\end{proof}

%
%

\section{factoring}

In this section we show how the results in the previous section can be incorporated into an investigation of Hyers-Ulam stability for certain higher-order singular linear differential equations with nonconstant coefficients.

Let $D$ be the differential operator defined by $Dy=y'$ for differentiable functions $y:(0,\infty)\rightarrow\C$, and $I$ the identity operator given by $Iy=y$. 
For a given function $\varphi:(0,\infty)\rightarrow\R$, let $(\varphi D)^0y=Iy=y$, $(\varphi D)y=\varphi y'$, and for positive integers $n$, let $(\varphi D)^ny=(\varphi D)^{n-1}\varphi Dy$.
We consider the higher-order singular linear differential equation
\begin{equation}\label{maineq}
 \sum_{k=0}^n \alpha_{n-k}(t^\gamma D)^k y(t)=0 
\end{equation}
for some real constants $\gamma$ and $\alpha_m$, where $\alpha_0=1$ for convenience. Note that if $\gamma=0$, then this is the $n$th-order linear constant coefficient differential equation
$$ \sum_{k=0}^n \alpha_{n-k} y^{(k)}(t)=0, $$
while if $\gamma=1$, this is a nested form of the $n$th-order linear Cauchy-Euler differential equation
$$ \sum_{k=0}^n \alpha_{n-k}(tD)^k y(t)=0. $$


\begin{remark}\label{remarkrho}
Our first task will be to factor \eqref{maineq} for the analysis to follow. To accomplish the factorization of \eqref{maineq}, we use the substitutions (see also \cite[Remark 2.2]{and2})
\[ \alpha_1 = \sum_i z_i, \quad \alpha_2 = \sum_{i<j} z_iz_j, \quad \alpha_3 = \sum_{i<j<k} z_iz_jz_k, \quad \alpha_4 = \sum_{i<j<k<\ell} z_iz_jz_kz_{\ell}, \]  
\[ \cdots \quad \alpha_m = \sum_{i_1<i_2<\cdots<i_m} z_{i_1}z_{i_2}\cdots  z_{i_m}, \quad \cdots \quad
 \alpha_n = z_{1}z_{2}z_{3}\cdots  z_{n}, \]
where $z_i\in\C$ for $i=1,2,\cdots,n$. Then we have the factorization of \eqref{maineq} given by
\begin{equation}\label{factorize}
 \sum_{k=0}^n \alpha_{n-k}(t^\gamma D)^k y(t)=\prod_{k=1}^{n} \left(t^\gamma D+z_kI\right)y(t)=0, \qquad n\in\N;
\end{equation}
see \cite[Lemma 2.3]{and2} and \cite[Section 2]{tunc} for more on this type of substitution and factorization. The following result for \eqref{maineq} is the main result in this section.
\end{remark}


\begin{theorem}[Hyers-Ulam Stability]\label{thmHUS}
For positive integer $n$, consider the higher-order singular differential equation \eqref{maineq} with real constants $\gamma$ and $\alpha_m$ for $m=0,1,\cdots, n$, where $\alpha_0=1$. Let the substitutions $z_1,\cdots,z_n$ be as given in Remark $\ref{remarkrho}$. Then the singular equation \eqref{maineq} has Hyers-Ulam stability on $(0,\infty)$ if and only if $\Re z_k\ne 0$ for each $k=1,2,\cdots,n$.
\end{theorem}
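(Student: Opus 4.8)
The plan is to leverage the factorization \eqref{factorize}, which expresses the operator in \eqref{maineq} as the composition $L := L_1 L_2 \cdots L_n$ of the pairwise-commuting first-order operators $L_k := t^\gamma D + z_k I$, and thereby to build the $n$th-order result out of $n$ applications of the first-order theory. For the sufficiency direction, assume $\Re z_k \ne 0$ for every $k$. By Theorem \ref{thm25}, each homogeneous equation $L_k y = 0$ is Hyers--Ulam stable on $(0,\infty)$ with some finite constant $K_k$, and by Remark \ref{renonhomo} the nonhomogeneous equations $L_k y = f$ enjoy the same stability constant (the cancellation of the forcing term in $|y-x|$ recorded there is insensitive to whether $f$ is real or complex). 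Given an $n$-times differentiable approximate solution $x$ of \eqref{maineq} with $|Lx| \le \varepsilon$, I would peel off one factor at a time: set $x^{(n)} = x$ and $x^{(k-1)} = L_k x^{(k)}$, so $x^{(0)} = Lx$ satisfies $|x^{(0)}| \le \varepsilon$. Since $L_1 x^{(1)} = x^{(0)}$, stability of $L_1$ furnishes $y^{(1)}$ with $L_1 y^{(1)} = 0$ and $|x^{(1)} - y^{(1)}| \le K_1 \varepsilon$. Inductively, from $|L_k x^{(k)} - y^{(k-1)}| = |x^{(k-1)} - y^{(k-1)}| \le K_1 \cdots K_{k-1}\varepsilon$ the nonhomogeneous stability of $L_k$, applied with forcing $f = y^{(k-1)}$, yields $y^{(k)}$ with $L_k y^{(k)} = y^{(k-1)}$ and $|x^{(k)} - y^{(k)}| \le K_1 \cdots K_k \varepsilon$. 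The function $y := y^{(n)}$ then satisfies $Ly = L_1 \cdots L_{n-1}(L_n y^{(n)}) = L_1 \cdots L_{n-1} y^{(n-1)} = \cdots = L_1 y^{(1)} = 0$ together with $|x - y| \le \big(\prod_{k=1}^n K_k\big)\varepsilon$, giving stability with $K = \prod_{k=1}^n K_k$.

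For the necessity direction, suppose $\Re z_j = 0$ for some $j$, write $z_j = i\beta$ with $\beta \in \R$, and let $m \ge 1$ be the number of indices $k$ for which $z_k = i\beta$. I would first reduce to constant coefficients exactly as in the proof of Theorem \ref{thm2.6}: the substitution $s = \frac{t^{1-\gamma}}{1-\gamma}$ (or $s = \ln t$ when $\gamma = 1$) carries $(0,\infty)$ onto an unbounded interval $J$ and turns $t^\gamma D$ into $\frac{d}{ds}$, so that \eqref{maineq} becomes the constant-coefficient equation $\tilde L Y = 0$ with $\tilde L := \prod_{k=1}^n \big(\frac{d}{ds} + z_k\big)$ on $J$. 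Because this change of variables preserves pointwise values, it preserves all sup-norms and the size of the perturbation, and hence Hyers--Ulam stability of \eqref{maineq} on $(0,\infty)$ is equivalent to Hyers--Ulam stability of $\tilde L Y = 0$ on $J$.

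Against the transformed equation I would test the resonant function $X(s) = \varepsilon \frac{s^m}{m!} e^{-i\beta s}$. Using $\big(\frac{d}{ds} + i\beta\big)\big(s^p e^{-i\beta s}\big) = p\, s^{p-1} e^{-i\beta s}$ for the $m$ resonant factors and $\big(\frac{d}{ds} + z_k\big) e^{-i\beta s} = (z_k - i\beta) e^{-i\beta s}$ for the rest, the product collapses to $\tilde L X = \varepsilon\, \nu\, e^{-i\beta s}$ with $\nu = \prod_{z_k \ne i\beta}(z_k - i\beta) \ne 0$, so $X$ is an approximate solution with constant defect $|\tilde L X| = |\nu|\varepsilon$. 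If $\tilde L Y = 0$ were Hyers--Ulam stable, there would exist an exact solution $Y$ with $\sup_J |X - Y| < \infty$; then $w := X - Y$ would be a bounded function on $J$ solving the resonantly forced equation $\tilde L w = \varepsilon \nu e^{-i\beta s}$. The main obstacle, and the technical heart of the proof, is to rule this out: since $-i\beta$ is a characteristic root of $\tilde L$ of multiplicity $m$, every solution of $\tilde L w = \varepsilon \nu e^{-i\beta s}$ carries a genuine $s^m e^{-i\beta s}$ term that no homogeneous solution --- whose content at frequency $\beta$ has degree at most $m-1$ --- can remove, forcing $w$ to be unbounded on the unbounded interval $J$. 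Establishing that distinct exponential and oscillatory modes cannot combine to cancel this resonant growth is precisely the higher-order analogue of the unbounded-distance computation carried out in the proof of Theorem \ref{thm25}, and completing it finishes the equivalence.
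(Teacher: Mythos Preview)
Your sufficiency argument---factor the operator, set up iterated approximate solutions, and peel off the first-order factors one at a time using Theorem~\ref{thm25} together with Remark~\ref{renonhomo}---is exactly the paper's approach. The only cosmetic difference is that you index in the reverse direction (taking $x^{(n)}=x$ and defining $x^{(k-1)}=L_kx^{(k)}$, whereas the paper takes $x_0=x$ and defines $x_k=L_kx_{k-1}$); since the factors $t^\gamma D+z_kI$ pairwise commute, the two orderings are equivalent and lead to the same product constant $\prod K_k$.

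For necessity you go beyond the paper: the paper's proof as written establishes only the \emph{if} direction and says nothing about instability when some $\Re z_j=0$, despite the ``if and only if'' in the statement. Your route---pass to constant coefficients via $s=t^{1-\gamma}/(1-\gamma)$ (or $s=\ln t$ when $\gamma=1$), then test against the resonant function $X(s)=\varepsilon\,\frac{s^m}{m!}e^{-i\beta s}$---is the natural higher-order extension of the counterexample in Theorem~\ref{thm25}. The computation $\tilde L X=\varepsilon\,\nu\,e^{-i\beta s}$ is correct once the commuting factors are reordered so the $m$ resonant ones act first, and the substitution does carry $(0,\infty)$ onto an unbounded interval $J$ in every case. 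The one point you rightly flag as the technical heart---that no homogeneous solution $Y$ can make $X-Y$ bounded on $J$---is genuine but standard: it reduces to the classical fact that an exponential-polynomial $\sum_k P_k(s)e^{\mu_k s}$ with distinct $\mu_k$, in which some purely imaginary frequency carries a nonconstant polynomial, is unbounded on every half-line. So your plan actually fills a gap the paper leaves open.
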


\begin{proof}
By Remark \ref{remarkrho} and \eqref{factorize} we have that \eqref{maineq} can be written in factored form as
\[ \sum_{k=0}^n \alpha_{n-k}(t^\gamma D)^k y(t) = \prod_{k=1}^{n} \left(t^\gamma D+z_kI\right)y(t)=0. \]
Now suppose there exists a function $x$ such that 
\begin{equation}\label{xprodep}
 \left| \prod_{k=1}^{n} \left(t^\gamma D+z_kI\right)x(t) \right|\le \varepsilon
\end{equation}
for some $\varepsilon>0$, for all $t\in(0,\infty)$. Define the new functions $y_n\equiv 0$, $x_0:=x$ and
\begin{equation}\label{eq33}
 x_k:=\left(t^\gamma D+z_kI\right)x_{k-1}, \quad k=1,\ldots,n.
\end{equation}
Then 
\[ x_k(t) = t^\gamma x'_{k-1}(t)+z_kx_{k-1}(t), \quad k=1,\ldots,n, \quad t\in(0,\infty), \]
and by \eqref{xprodep} and \eqref{eq33} (recall $y_n=0$ and $x_0=x$), we have
\begin{equation}\label{xnep}
 |y_n(t)-x_n(t)|=|x_n(t)| \le \varepsilon, \quad t\in(0,\infty) 
\end{equation}
by construction. Hyers-Ulam stability of \eqref{eq33} on $(0,\infty)$ with $k=n$ implies there exists a solution $y_{n-1}$ of
\[ \left(t^\gamma D+z_nI\right)y_{n-1}(t)=y_n(t) \] 
such that $|y_{n-1}(t)-x_{n-1}(t)| \le K_n \varepsilon$; substituting from \eqref{eq33} with $k=n-1$ this inequality becomes
\[ |y_{n-1}(t)-t^\gamma x'_{n-2}(t)-z_{n-1}x_{n-2}(t)| \le K_n \varepsilon, \]
and so on. We proceed by iterating the proofs of Lemmas \ref{lem22} and \ref{lem23}, and using Remark \ref{renonhomo}. Let $A_k=0$ if $\Re z_k>0$ and $\gamma\ge 1$; let $A_k=\infty$ if $\Re z_k<0$ and $\gamma\le 1$; let $A_k=1$ otherwise, for $k=1,\cdots,n$. Let $t_k\in(0,\infty)$ for $k=1,\cdots,n$. Let $x_{k-1}$ solve \eqref{eq33} with initial condition at $t_k$, and $y_{k-1}$ solve
\begin{equation}\label{yksol}
 \left(t^\gamma D+z_kI\right)y_{k-1} = y_k, \quad k=1,\ldots,n. 
\end{equation}
If $\gamma=1$, then
\begin{eqnarray}\label{yxc1fkk}
\nonumber y_{k-1}(t) &=& c_{k-1}t^{-z_k}+t^{-z_k}\int_{A_k}^{t}s^{z_k-1}y_k(s)ds,  \\ 
\nonumber x_{k-1}(t) &=& y_{k-1}(t)+t^{-z_k}\int_{A_k}^{t}s^{z_k-1}x_k(s)ds, \\
          c_{k-1}    &=& t_k^{z_k}x_{k-1}(t_k)+\int_{t_k}^{A_k}s^{z_k-1}(y_k(s)+x_k(s))ds;
\end{eqnarray}
if $\gamma\ne 1$, then
\begin{eqnarray}\label{yxcnot1kk}
\nonumber y_{k-1}(t) &=& c_{k-1}e^{\frac{-z_kt^{1-\gamma}}{1-\gamma}}+e^{\frac{-z_kt^{1-\gamma}}{1-\gamma}}\int_{A_k}^{t}s^{-\gamma}y_k(s)e^{\frac{z_ks^{1-\gamma}}{1-\gamma}}ds, \\ \nonumber x_{k-1}(t) &=& y_{k-1}(t)+e^{\frac{-z_kt^{1-\gamma}}{1-\gamma}}\int_{A_k}^{t}s^{-\gamma}x_k(s)e^{\frac{z_ks^{1-\gamma}}{1-\gamma}}ds, \\    
   c_{k-1} &=& e^{\frac{z_kt_k^{1-\gamma}}{1-\gamma}}x_{k-1}(t_k)+\int_{t_k}^{A_k}s^{-\gamma}(y_k(s)+x_k(s))e^{\frac{z_ks^{1-\gamma}}{1-\gamma}}ds.
\end{eqnarray}
Using \eqref{eq33}, \eqref{xnep}, and Tables \ref{tab:(1,infty)K} and \ref{tab:(0,1)K}, we have on $(0,\infty)$ that
\[ |y_{k-1}(t)-x_{k-1}(t)| \le \varepsilon \prod_{j=k}^{n}K_j, \quad K_j=
\begin{cases} 
 \displaystyle \frac{1}{|\Re z_j|} &: \Re z_j<0, \gamma\le 1, \\
 \displaystyle \frac{1}{\Re z_j} &: \Re z_j>0, \gamma\ge 1, \\
 \max\left\{\displaystyle\frac{1}{\Re z_j},\displaystyle\frac{1}{\Re z_j}\left(e^{\frac{\Re z_j}{1-\gamma}}-1\right)\right\} &: \Re z_j>0, \gamma<1, \\
 \max\left\{\displaystyle\frac{1}{|\Re z_j|},\displaystyle\frac{1}{|\Re z_j|}\left(e^{\frac{\Re z_j}{1-\gamma}}-1\right)\right\} &: \Re z_j<0, \gamma>1, \\
\end{cases} \]
starting with $k=n$ and proceeding down to $k=1$. In particular (recall $x_0=x$, the original approximate solution), we arrive at
\[ |y_{0}(t)-x_{0}(t)| = |y_{0}(t)-x(t)| \le \varepsilon \prod_{j=1}^{n}K_j, \quad t\in(0,\infty). \]
From \eqref{yksol}, and using the fact that $y_n=0$, we see that $y_0$ is a solution of the factored form of \eqref{maineq}. Thus \eqref{maineq} has Hyers-Ulam stability on $(0,\infty)$.
\end{proof}


\begin{example}
Consider the singular differential equation \eqref{factorize} on $(0,\infty)$ with $n=2$, $\gamma=2$, and $z_k=-k<0$ for $k=1,2$, that is
\[ t^2\left(t^2y'\right)'(t)-3t^2y'(t)+2y(t) = \left(t^2 D-1I\right)\left(t^2 D-2I\right)y(t)=0, \quad t\in(0,\infty). \]
Note that for each of the first-order factors,
\[ \inf_{t\in (0,\infty)}|\Re\lambda_k(t)|=\inf_{t\in (0,\infty)}\frac{-z_k}{t^2}=0 \]
thus violating \eqref{popacondition}.

Suppose there exists a differentiable function $x:(0,\infty)\rightarrow\C$ and a constant $\varepsilon>0$ such that
\[ \left|\left(t^2 D-1I\right)\left(t^2 D-2I\right)x(t)\right|\le\varepsilon. \]
As in the proof of Theorem \ref{thmHUS}, we use $x$ to construct a solution $y$ of \eqref{factorize} given by
\[ y(t)=e^{2-2/t}x(1)+\left(e^{2-2/t}-e^{1-1/t}\right)\left(x'(1)-2x(1)\right). \]
Then $y$ exists on $(0,\infty)$ with finite limits at the endpoints of the interval, and 
\[ |y(t)-x(t)|\le \frac{(e-1)(e^2-1)}{2}\varepsilon \] 
for all $t\in(0,\infty)$.
\end{example}

\section*{Acknowledgements}
This research was supported by an NSF STEP grant (DUE 0969568) and by a gift to the Division of Sciences and Mathematics at Concordia College made in support of undergraduate research.


\end{document}